\setlist[enumerate]{label={(\arabic*)}}
\crefname{equation}{}{}
\newtheorem{theorem}{Theorem}
\newtheorem{lemma}[theorem]{Lemma}
\newtheorem*{theorem*}{Theorem}
\theoremstyle{definition}
\theoremstyle{remark}
\newtheorem{remark}[theorem]{Remark}
\def\E{\opr{E}} 
\newcommand\opr[1]{\operatorname{#1}}
\def\Aut{\opr{Aut}}
\newcommand\br[1]{{\left(#1\right)}}
\newcommand\floor[1]{\left\lfloor#1\right\rfloor}
\def\Syl{\opr{Syl}}
\def\CC{\mathcal{C}}
\def\supp{\opr{supp}}
\def\sm{\smallsetminus}
\author{Sean Eberhard}
\address{\parbox{\linewidth}{Mathematics Institute, Zeeman Building, University of Warwick, UK \vspace{0.1cm}}}
\email{sean.eberhard@warwick.ac.uk}
\thanks{SE is supported by the Royal Society.}
\begin{document}
\title{Intersections of Sylow $2$-subgroups in symmetric groups}

\begin{abstract}
    We compute the asymptotic probability that a random pair of Sylow $2$-subgroups in $S_n$ or $A_n$ intersects trivially.
    This calculation complements recent work of Diaconis, Giannelli, Guralnick, Law, Navarro, Sambale, and Spink.
\end{abstract}

\maketitle


Lately there has been a surge of interest in intersections of Sylow subgroups in finite groups.
A particularly bold general conjecture was proposed recently by Lisi and Sabatini~\cite{LS}.
In the special case of symmetric groups, the main theorem of \cite{7AP} is the striking result that if $p > 2$ then pairs of random Sylow $p$-subgroups tend to intersect trivially, while if $p = 2$ then the intersection is nontrivial with probability \emph{at least} approximately $1 - e^{-1/2} + o(1)$.
One of the key ideas in the proof is to use a theorem of the author and Daniele Garzoni on random generation of $A_n$ by elements of fixed cycle type~\cite{EG}.
In this note we prove the following result about intersection of Sylow $2$-subgroups, complementing the theorem of \cite{7AP}.

\begin{theorem}
    \label{thm}
    Let $P \in \Syl_2(S_n)$ and let $x \in S_n$ be chosen uniformly at random. Then
    \[
        \Pr\br{P \cap P^x = 1} = e^{-1/2} + O(1/n).
    \]
    If $P \in \Syl_2(A_n)$ then similarly
    \[
        \Pr\br{P \cap P^x = 1} = (3/2) e^{-1/2} + O(1/n).
    \]
\end{theorem}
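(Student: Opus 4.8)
The plan is to reduce both statements to two combinatorial statistics attached to $x$. Recall that, writing $n=2^{a_1}+\dots+2^{a_r}$ in binary, a Sylow $2$-subgroup $Q$ of $S_n$ is the direct product over the digits of the iterated wreath products $C_2\wr\dots\wr C_2$ acting on blocks of size $2^{a_i}$; equivalently $Q=\Aut(\mathcal F)$ for a forest $\mathcal F$ of complete binary trees with $n$ leaves in total. Inside $Q$ lies the ``bottom layer'' $B\nsgp Q$ generated by the $m:=\floor{n/2}$ transpositions of sibling leaves; writing $\tau_1,\dots,\tau_m$ for these elementary transpositions and $e_1,\dots,e_m$ for their supports, the $e_i$ form a perfect matching $M$ of $[n]$ when $n$ is even (I assume $n$ even throughout; the odd case is essentially identical, with $M$ then missing one point). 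Thus $B\cong C_2^m$, and $Q/B$ is a Sylow $2$-subgroup of $S_m$. Since $A_n\nsgp S_n$ we have $(Q\cap A_n)^x=Q^x\cap A_n$, so for $P\in\Syl_2(A_n)$ (which we may take to be $Q\cap A_n$) we get $P\cap P^x=(Q\cap Q^x)\cap A_n$; it therefore suffices to understand $Q\cap Q^x$ precisely.

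Set $k:=|M\cap x^{-1}(M)|$, the number of elementary transpositions of $Q$ that are also elementary transpositions of $Q^x$. As $x$ ranges over $S_n$, $x^{-1}(M)$ is a uniformly random perfect matching, so $k$ is the overlap of a fixed matching with a random one; this classical quantity converges in distribution to $\mathrm{Poisson}(1/2)$, with $\Pr(k=j)=e^{-1/2}/(2^j j!)+O(1/n)$ for each fixed $j$. The crux is the assertion that, \emph{with probability $1-O(1/n)$, one has $Q\cap Q^x=B\cap B^x$.} Granting this, note $B\cap B^x=\gen{\tau_i:x(e_i)\in M}\cong C_2^k$. Hence $\Pr(Q\cap Q^x=1)=\Pr(k=0)+O(1/n)=e^{-1/2}+O(1/n)$, which is the symmetric-group statement. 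For the alternating group, on the event $Q\cap Q^x=B\cap B^x$ the intersection $P\cap P^x$ is the group of even products of the $k$ common elementary transpositions, which is trivial exactly when $k\le 1$ (if $k=1$ the unique common elementary transposition is an odd permutation); therefore $\Pr(P\cap P^x=1)=\Pr(k\le1)+O(1/n)=(1+\tfrac12)e^{-1/2}+O(1/n)=\tfrac32 e^{-1/2}+O(1/n)$.

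To prove the crux I would argue by a first moment. Since $B\cap B^x\le Q\cap Q^x$ always and a proper overgroup of a finite $2$-group has at least twice the order, $|Q\cap Q^x|-|B\cap B^x|\ge\mathbf 1[Q\cap Q^x\ne B\cap B^x]$ pointwise, so $\Pr(Q\cap Q^x\ne B\cap B^x)\le\E|Q\cap Q^x|-\E|B\cap B^x|$. A short orbit-counting computation gives, for any $H\le S_n$, $\E_x|H\cap H^x|=\sum_c|H\cap c|^2/|c^{S_n}|$ with $c$ running over conjugacy classes of $S_n$; thus it is enough to bound $\sum_c(|Q\cap c|^2-|B\cap c|^2)/|c^{S_n}|$ by $O(1/n)$. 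As $B$ is elementary abelian, $|B\cap c|$ vanishes unless $c$ is the class of an involution with $t$ transpositions, where $|B\cap c|=\binom mt$; writing $N_t:=|Q\cap c|$ for that class, $|Q\cap c|^2-|B\cap c|^2\le 2N_t(N_t-\binom mt)$. The involutions of $Q$ outside $B$ are precisely those whose cycle structure contains a full swap of the two halves of some tree-block of size $\ge4$, and these form a proportion $O(t^2/n)$ of all involutions with $t$ transpositions, so $N_t=\binom mt(1+O(t^2/n))$ for $t\le\sqrt n$ and $N_t\le\binom mt\,e^{O(t^2/n)}$ in general. Since also $\binom mt^2/|c^{S_n}|=\E\binom kt=2^{-t}/t!+O(t\,2^{-t}/n)$, the involution classes contribute $\tfrac1n\sum_t O(t^2)\,2^{-t}/t!+o(1/n)=O(1/n)$, and the remaining classes (which never meet $B$) are dominated by the class of a single $4$-cycle and contribute $O(1/n^2)$.

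The routine inputs here are the Poisson limit for $k$ and the orbit-counting identity. The step demanding real care — and the main obstacle — is the uniform control of the conjugacy-class statistics $N_t$ of $Q$: one must make $N_t-\binom mt=O(t^2N_t/n)$ precise enough, together with adequate bounds for larger $t$, that the tail $t>\sqrt n$ is genuinely negligible. Group-theoretically this is the statement that a nontrivial element shared by two random Sylow $2$-subgroups of $S_n$ almost never has large support, and it is exactly the point where the wreath-product structure of $Q$, rather than merely its order, must be used.
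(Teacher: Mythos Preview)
Your proposal is correct and follows essentially the same route as the paper: reduce to $Q\cap Q^x=B\cap B^x$ with probability $1-O(1/n)$ by a first-moment computation over conjugacy classes, then invoke the Poisson$(1/2)$ limit for the rank $k$ of $B\cap B^x$ (and $k\le 1$ for the $A_n$ statement). The obstacle you correctly single out --- uniform control of $N_t-\binom{m}{t}$, the tail $t>\sqrt n$, and the non-involution classes --- is exactly what the paper's key lemma provides, via the uniform bound $\#\{g\in Q\setminus B:\supp(g)=s\}\le O(1)^s(n/s)^{s/2-1}$ (proved for large $s$ by a generating-function argument), after which the paper sums over all supports at once rather than splitting off involutions.
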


Let $P \in \Syl_2(S_n)$.
The structure of $P$ is summarized in \cite{7AP}*{\S 2.1}.
Write $n = 2^{e_1} + \cdots + 2^{e_m}$ where $e_1 > \cdots > e_m \ge 0$.
Then $P$ can be identified with the automorphism group of a forest $F$ of $m$ complete binary trees $T_1, \dots, T_m$ of heights $e_1, \dots, e_m$ whose leaves are labelled by $\{1, \dots, n\}$:
\[
    P \cong \Aut(F) \cong \Aut(T_1) \times \cdots \times \Aut(T_m).
\]
Each factor $\Aut(T_i)$ is isomorphic to the iterated wreath product $C_2 \wr \cdots \wr C_2$ of $e_i$ copies of $C_2$.
For any vertex $v$ of $F$, we denote by $T_v$ denote the complete subtree rooted at $v$, and we define the \emph{height} of $v$ to be the height of $T_v$.
The group $P$ acts transitively on the vertices of $T_i$ of any given height.
Let $A \le P$ be the elementary abelian subgroup of rank $\floor{n/2}$ that stabilizes the vertices of positive height.

If $g \in S_n$ we denote by $\supp(g)$ the size of its support.

\begin{lemma}
    \label{key-lemma}
    Let $s \ge 0$ be an even integer.
    \begin{enumerate}
        \item $\#\{g \in A : \supp(g) = s\} \le O(1)^s (n/s)^{s/2}$.
        \item $\#\{g \in P \sm A : \supp(g) = s\} \le O(1)^s (n/s)^{s/2 - 1}$.
    \end{enumerate}
\end{lemma}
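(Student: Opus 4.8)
The plan is to describe every element of $P$ of given support by a single combinatorial invariant, and then reduce both parts to estimating a multinomial sum. \emph{Structural step.} For $g\in P$ let $\Phi(g)$ be the set of internal vertices $v$ of $F$ such that $g$ fixes $v$ but interchanges the two children of $v$. I claim: (a) $\Phi(g)$ is an antichain in $F$, since once $g$ swaps the children of $v$ it moves every strict descendant of $v$; (b) $\supp(g)$ is exactly the union of the leaf-sets of the subtrees $T_v$, $v\in\Phi(g)$, so $\supp(g)=\sum_{v\in\Phi(g)}2^{h(v)}$, which one sees by walking the root-to-leaf path of a leaf and finding the last vertex on it fixed by $g$; and (c) for a fixed antichain $W$ of internal vertices, $\#\{g\in P:\Phi(g)=W\}=\prod_{v\in W}2^{2^{h(v)}-2}$, because such a $g$ must be trivial off $\bigcup_{v\in W}T_v$ and on each $T_v$ may be any of the $2^{2^{h(v)}-2}$ automorphisms of $T_v$ interchanging the two principal subtrees (one also checks that no further vertices then enter $\Phi(g)$). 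Tracking which vertices get swapped shows moreover that $g\in A$ if and only if $\Phi(g)$ consists only of height-$1$ vertices, of which there are $\floor{n/2}$; since $2^{2^1-2}=1$ and any set of height-$1$ vertices is an antichain, this gives $\#\{g\in A:\supp(g)=s\}=\binom{\floor{n/2}}{s/2}\le(en/s)^{s/2}=O(1)^s(n/s)^{s/2}$, which is Part (1).

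For Part (2), let $N_h$ be the number of height-$h$ vertices of $F$, so $N_h\le n/2^h$. An element of $P\sm A$ is a choice of antichain $W$ with some vertex of height $\ge2$ together with a swap-automorphism on each $T_v$, $v\in W$. Discarding the antichain restriction, writing $a_h$ for the number of chosen vertices of height $h$ and $s_2=\sum_{h\ge2}a_h2^h$, I would bound $\#\{g\in P\sm A:\supp(g)=s\}$ by
\[
\sum_{\substack{(a_h)_{h\ge2}\neq 0\\ s_2\le s}}\binom{N_1}{(s-s_2)/2}\prod_{h\ge2}\binom{N_h}{a_h}\,2^{(2^h-2)a_h}.
\]
Using $\binom{N_h}{a_h}\le (n/2^h)^{a_h}/a_h!$ and $\binom{N_1}{(s-s_2)/2}\le (s/n)^{s_2/2}\,(n/2)^{s/2}/(s/2)!$, the inner products turn into exponential series and the whole bound collapses to
\[
\frac{(n/2)^{s/2}}{(s/2)!}\left(\exp\Bigl(n\sum_{h\ge2}2^{2^h-2-h}(s/n)^{2^{h-1}}\Bigr)-1\right).
\]
When $s\le n/5$ one has $4s/n<1$, so the sum in the exponent is bounded by its $h=2$ term times a convergent geometric tail, hence is at most $2s^2/n$; the bracket is then at most $(2s^2/n)e^{2s^2/n}\le (2s^2/n)e^{s}$, and combining with $(n/2)^{s/2}/(s/2)!\le(en/s)^{s/2}$ gives $O(1)^s(n/s)^{s/2-1}$ — the gain of one factor $n/s$ over Part (1) being exactly what the inequality asks for. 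For $n/5<s\le n$ I would instead use the trivial bound $\#\{g\in P\sm A:\supp(g)=s\}\le|P|\le 2^{n}\le 32^{s}$ together with $(n/s)^{s/2-1}\ge1$.

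The real content is the structural step, in particular (c): one must check both that every assignment of swap-automorphisms to the trees sitting above an antichain $W$ yields an element with $\Phi=W$, and that no extra vertices slip into $\Phi$ (the strict descendants of $W$ are moved, and everything incomparable to $W$ is fixed together with its children). Granting that, Part (1) is immediate and Part (2) is routine bookkeeping, the only mild care needed being to treat large $s$ separately by the trivial bound, since the displayed exponential diverges once $s$ approaches $n/4$.
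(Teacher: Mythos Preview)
Your proof is correct. The structural step is exactly the paper's: the paper's ``type (b)'' vertices (fixed vertices adjacent to a moved vertex) are precisely your $\Phi(g)$, and the count $\prod_{v\in W}2^{2^{h(v)}-2}$ appears there verbatim. Part~(1) is identical.

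For Part~(2) the two estimations diverge after the common setup. The paper keeps all heights $h\ge 1$ together in a single profile $(m_h)$, splits at $s\approx\sqrt{n}$, handles small $s$ by writing $\sum m_h=s/2-t$ and bounding the number of profiles with given $t$ by the partition function, and handles large $s$ by interpreting the multinomial sum as a power-series coefficient and bounding it via $[X^{s/2}]F(X)\le x^{-s/2}F(x)$ at $x=s/(2n)$. You instead peel off the $h=1$ contribution as a single binomial, sum the heights $h\ge 2$ into the closed exponential
\[
\exp\Bigl(n\sum_{h\ge2}2^{2^h-2-h}(s/n)^{2^{h-1}}\Bigr)-1,
\]
which stays under control up to $s\le n/5$, and dispatch $s>n/5$ with the trivial bound $|P|\le 2^{n}\le 32^{s}$ together with $(n/s)^{s/2-1}\ge 1$. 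Your route makes the saved factor of $n/s$ visible directly as the $(\exp(\cdot)-1)\le (2s^2/n)e^{O(s)}$ step, and the large-$s$ case is considerably more elementary than the paper's generating-function substitution; the paper's organisation, on the other hand, avoids ever appealing to the global bound $|P|\le 2^n$.
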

\begin{proof}
    \emph{(1)}
    Since $A$ is generated by $\floor{n/2}$ disjoint transpositions, the number of $g \in A$ of support $s$ is exactly $\binom{\floor{n/2}}{s/2}$,
    which is at most $(en/s)^{s/2}$.
    
    \emph{(2)}
    To each $g \in P$ we can associate the subforest $F_g$ of $F$
    induced by the vertices of $F$ that are either (a) moved by $g$ or (b) not moved by $g$ but adjacent to a vertex moved by $g$.
    Let $v_1, \dots, v_k$ be the vertices of type (b), and let $h_i > 0$ be the height of $v_i$.
    Then $F_g$ is the disjoint union of the subtrees $T_{v_1}, \dots, T_{v_k}$.
    The support of $g$ is $s = 2^{h_1} + \cdots + 2^{h_k}$, and $g \in A$ if and only if $h_i = 1$ for all $i$.
    
    Consider the ways of choosing $h_1, \dots, h_k$ and $v_1, \dots, v_k$ and $g$.
    First, assume $h_1, \dots, h_k$ are given, where $h_1 \ge \cdots \ge h_k > 0$.
    For each $h > 0$ let $m_h$ be the number of indices $i$ with $h_i = h$.
    The number of vertices of $F$ of height $h$ is at most $n / 2^h \le n$, so the number of choices for $v_1, \dots, v_k$ is at most
    \[
        \frac{\prod_{i=1}^k (n / 2^{h_i})}{\prod_{h>0} m_h!} \le \prod_{h > 0} \frac{n^{m_h}}{m_h!}.
    \]
    Next, the restriction of $g$ to $T_{v_i}$ must be an automorphism swapping the two main branches, of which there are $2^{2^{h_i} - 2}$, so the number of choices for $g$ given $v_1, \dots, v_k$ is $\prod_{i=1}^k 2^{2^{h_i} - 2} \le 2^s$.
    Thus the number of possibilities for $g$ given $h_1, \dots, h_k$, or equivalently given $(m_h)$, is bounded by
    \[
        2^s \prod_{h>0} \frac{n^{m_h}}{m_h!}.
    \]
    It follows that the number of $g \in P \sm A$ of support $s$ is bounded by
    \begin{equation}
        \label{eq:big-sum}
        2^s \sum_{(m_h)}
        \prod_{h>0} \frac{n^{m_h}}{m_h!},
    \end{equation}
    where the sum goes over all nonnegative integer solutions $(m_h)_{h>0}$ to
    \begin{equation}
        \label{eq:m_h}
        \sum_{h>0} 2^{h-1} m_h = s/2,
        \hspace{4em}
        m_1 < s/2.
    \end{equation}
    
    Let $(m_h)$ be a solution to \eqref{eq:m_h}.
    Write $\sum m_h = s/2 - t$. Then $t = \sum_{h>1} (2^{h-1} - 1) m_h$.
    In particular the number of solutions $(m_h)$ with a given $t$ is bounded by the number of partitions of $t$, which is at most $2^t$. Also,
    \[
        s/2 = \sum_{h>0} 2^{h-1} m_h \le m_1 + 2 \sum_{h>1} (2^{h-1} - 1) m_h = m_1 + 2t,
    \]
    so $m_1 \ge s/2 - 2t$. Therefore our sum \eqref{eq:big-sum} is bounded by
    \[
        2^s \sum_{0 < t \le s/2} \frac{2^t n^{s/2-t}}{\max(s/2 - 2t, 0)!}.
    \]
    If $2 n^{-1} (s/2 - 2) (s/2 - 3) < 1$ then the largest term in the sum is the $t=1$ term, so the sum is bounded by
    \[
        s 2^{s+1} \frac{n^{s/2-1}}{(s/2 - 2)!} = O(1)^s (n/s)^{s/2-1}.
    \]
    This completes the proof for say $s < (2n)^{1/2}$.
    
    For $s \ge (2n)^{1/2}$ there is a phase transition and we analyze differently. Ignoring the restriction $m_1 < s/2$, the sum \eqref{eq:big-sum} is the coefficient of $X^{s/2}$ in the generating function
    \[
        F(X)
        = 2^s \prod_{h>0} \br{\sum_{m_h\ge 0} \frac{n^{m_h}}{m_h!} X^{2^{h-1} m_h}}
        = 2^s \exp\br{ \sum_{h>0} n X^{2^{h-1}}}.
    \]
    Since this power series has nonnegative coefficients, for any real $x > 0$ the coefficient of $X^{s/2}$ is bounded by
    \[
        x^{-s/2} F(x) = x^{-s/2} 2^s \exp\br{\sum_{h>0} n x^{2^{h-1}}}.
    \]
    Set $x = s/(2n)$. Then the argument of the exponential is
    \[
        \sum_{h>0} n (s/2n)^{2^{h-1}} = \frac{s}{2} \sum_{h>0} (s/2n)^{2^{h-1} - 1} \le \frac{s}{2} \sum_{e \ge 0} 2^{-e} = s.
    \]
    Therefore the sum \eqref{eq:big-sum} is bounded by $O(1)^s (n/s)^{s/2}$, which is the same as $O(1)^s (n/s)^{s/2-1}$ in this region, so this completes the proof.
\end{proof}

The \emph{support} of a partition $\lambda \vdash n$ is the sum of its nonunit parts. It is clear that there are at most $2^s$ partitions of $n$ of support $s$.

\begin{lemma}
    Let $x \in S_n$ be uniformly random. Then $P \cap P^x = A \cap A^x$ with probability $1 - O(1/n)$.
\end{lemma}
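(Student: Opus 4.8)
The plan is to establish the stronger statement that $P \cap P^x \le A$ fails with probability $O(1/n)$, and likewise $P \cap P^x \le A^x$. Since $A \le P$ gives $A \cap A^x \le P \cap P^x$ unconditionally, these two facts together yield $P \cap P^x = A \cap A^x$ with probability $1 - O(1/n)$. Moreover, conjugating by $x^{-1}$ turns the event $P \cap P^x \not\le A^x$ into $P \cap P^{x^{-1}} \not\le A$, and $x \mapsto x^{-1}$ preserves the uniform distribution on $S_n$, so the two bad events are equiprobable and it suffices to bound $\Pr(P \cap P^x \not\le A)$.

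For this I would use a first-moment argument. For fixed $g \in S_n$ we have $g \in P^x$ if and only if $g^{x^{-1}} \in P$, and since $g^{x^{-1}}$ is a uniformly random $S_n$-conjugate of $g$,
\[
    \Pr(g \in P^x) = \frac{\#\{h \in P : h \text{ conjugate to } g\}}{|g^{S_n}|}.
\]
Writing $M_P(\lambda)$ and $N_P(\lambda)$ for the number of elements of $P \sm A$ and of $P$, respectively, having cycle type $\lambda$, and $|C_\lambda|$ for the size of that conjugacy class, the union bound gives
\[
    \Pr(P \cap P^x \not\le A) \le \sum_{1 \ne g \in P \sm A} \Pr(g \in P^x) = \sum_{\lambda \ne (1^n)} \frac{M_P(\lambda)\, N_P(\lambda)}{|C_\lambda|}.
\]

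Next I would group the sum by the support $s$ of $\lambda$, which is even since $P$ has no element of odd support. Using $\sum_{\supp\lambda = s} M_P(\lambda) N_P(\lambda) \le \br{\sum_{\supp\lambda=s}M_P(\lambda)}\br{\max_{\supp\lambda=s}N_P(\lambda)}$ together with \cref{key-lemma}, the contribution of a given $s$ to the numerator is at most $O(1)^s (n/s)^{s/2-1}\cdot O(1)^s(n/s)^{s/2}$. For the denominator I would prove the uniform lower bound $|C_\lambda| \ge (n/s)^s s^{s/2} / O(1)^s$ for every $\lambda$ of support $s$: writing $z_\lambda = \prod_i i^{m_i} m_i!$ with $m_1 = n - s$, the inequality $i \le 2^{i-1}$ gives $\prod_{i \ge 2} i^{m_i} \le 2^s$, the bound $\prod_{i\ge2}m_i! \le (s/2)!$ holds because a partition of support $s$ has at most $s/2$ nonunit parts, and then $|C_\lambda| = n!/z_\lambda \ge \binom{n}{s} s! / (2^s (s/2)!) \ge (n/s)^s (s/2)^{s/2} / 2^s$ using $\binom ns \ge (n/s)^s$ and $s!/(s/2)! \ge (s/2)^{s/2}$. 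Combining these, the $s$-term of the sum is $O(1)^s\, n^{-1}\, s^{1-s/2}$, and since $\sum_{s \ge 2} O(1)^s s^{1-s/2}$ converges, the whole sum is $O(1/n)$, as required.

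The crux is the interplay of the two parts of \cref{key-lemma}: applying part (2) to the factor $M_P(\lambda)$ is precisely what replaces an $O(1)$ contribution per support level (which is all one would get from part (1) alone, and which is consistent with $\Pr(P \cap P^x = 1)$ tending to a constant) by an $O(s/n)$ one. The main point to check carefully is that the conjugacy-class lower bound holds uniformly over all cycle types of a given support, including $s$ close to $n$; for such large $s$ the corresponding terms are doubly exponentially small and cause no difficulty.
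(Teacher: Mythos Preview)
Your proof is correct and follows essentially the same route as the paper: both reduce to showing $\E|P \cap P^x \sm A| = \sum_\lambda M_P(\lambda) N_P(\lambda)/|C_\lambda| = O(1/n)$, group by support $s$, and combine the two parts of \cref{key-lemma} with the uniform class-size lower bound $|C_\lambda| \ge O(1)^{-s} n^s s^{-s/2}$ to get summands $O(1)^s n^{-1} s^{1-s/2}$. The only cosmetic differences are that you make the $x \mapsto x^{-1}$ symmetry explicit and supply a self-contained proof of the class-size bound, whereas the paper cites \cite{EG}*{Lemma~4.7}.
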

\begin{proof}
    It suffices to prove that $P \cap P^x \subseteq A$ with probability $1 - O(1/n)$, as $P \cap P^x \subseteq A^x$ with the same probability.
    The expected size of $P \cap P^x \sm A$ is
    \[
        \E|P \cap P^x \sm A|
        = \sum_{\lambda \vdash n} \frac{|\CC_\lambda \cap P| \ |\CC_\lambda \cap P \sm A|}{|\CC_\lambda|},
    \]
    where $\CC_\lambda \subset S_n$ is the class of elements of type $\lambda$,
    and obviously we may restrict to those $\lambda$ whose parts are powers of $2$, and in particular even support.
    If $\lambda$ has support $s$ then $|\CC_\lambda| \ge O(1)^{-s} s^{-s/2} n^s$.
    Indeed, if $g \in \CC_\lambda \cap S_s$ then $|g^{S_n}| = \binom{n}{s} |g^{S_s}| \ge (n/s)^s |g^{S_s}|$ and $|g^{S_s}| \ge O(1)^{-s} s^{s/2}$ (see for example \cite{EG}*{Lemma~4.7}).
    Therefore from \Cref{key-lemma} we obtain the bound
    \[
        \E|P \cap P^x \sm A|
        = \sum_{s=2}^n O(1)^s \frac{(n/s)^{s-1}}{s^{-s/2} n^s}
        = \frac1n \sum_{s=2}^\infty O(1)^s s^{-s/2}
        = O(1/n).
    \]
    It follows that $P \cap P^x \sm A$ is nonempty with probability $O(1/n)$.
\end{proof}

\begin{proof}[Proof of \Cref{thm}]
    By the previous lemma we have $P \cap P^x = A \cap A^x$ with probability $1 - O(1/n)$, so it suffices to understand $A \cap A^x$. By \cite{7AP}*{\S 4.2}, $A \cap A^x$ is elementary abelian of rank $W$ where $W$ is a random variable that is asymptotically Poisson with parameter $1/2$.
    In fact it is routine to verify that the convergence has rate $O(1/n)$.
    Therefore
    \[
        \Pr(P \cap P^x = 1) = \Pr(W = 0) + O(1/n) = e^{-1/2} + O(1/n).
    \]
    Similarly,
    \[
        \Pr(P \cap P^x \cap A_n = 1) = \Pr(W \le 1) + O(1/n) = (3/2) e^{-1/2} + O(1/n).
    \]
    This completes the proof.
\end{proof}


\begin{remark}
    For comparison, if $G$ is a large simple group of Lie type and $P \in \Syl_p(G)$, it is most likely true that $\Pr(P \cap P^x = 1) = 1 - o(1)$ unless $p$ is the defining characteristic of $G$ and $p$ is small compared to the rank of $G$.
    If $G = \opr{PSL}_n(p)$ and $p$ is bounded then $\Pr(P \cap P^x = 1)$ tends to zero, but comparatively slowly.
    These observations suggest a probabilistic approach to the problem of Vdovin~\cite{kourovka}*{15.40}.
\end{remark}

\bibliography{refs}

\end{document}